\documentclass[final,onefignum,onetabnum]{siamart171218}

\usepackage{braket,amsfonts}

\usepackage{array}

\usepackage[caption=false]{subfig}
\captionsetup[subtable]{position=bottom}
\captionsetup[table]{position=bottom}

\usepackage{pgfplots}

\newsiamthm{claim}{Claim}
\newsiamremark{remark}{Remark}
\newsiamremark{hypothesis}{Hypothesis}
\crefname{hypothesis}{Hypothesis}{Hypotheses}

\usepackage{algorithmic}

\usepackage{graphicx,epstopdf}
\usepackage[export]{adjustbox}

\Crefname{ALC@unique}{Line}{Lines}

\usepackage{amsopn}

\usepackage{xspace}
\usepackage{bold-extra}
\usepackage[most]{tcolorbox}

\colorlet{texcscolor}{blue!50!black}
\colorlet{texemcolor}{red!70!black}
\colorlet{texpreamble}{red!70!black}
\colorlet{codebackground}{black!25!white!25}


\lstdefinestyle{siamlatex}{%
  style=tcblatex,
  texcsstyle=*\color{texcscolor},
  texcsstyle=[2]\color{texemcolor},
  keywordstyle=[2]\color{texemcolor},
  moretexcs={cref,Cref,maketitle,mathcal,text,headers,email,url},
}

\tcbset{%
  colframe=black!75!white!75,
  coltitle=white,
  colback=codebackground, 
  colbacklower=white, 
  fonttitle=\bfseries,
  arc=0pt,outer arc=0pt,
  top=1pt,bottom=1pt,left=1mm,right=1mm,middle=1mm,boxsep=1mm,
  leftrule=0.3mm,rightrule=0.3mm,toprule=0.3mm,bottomrule=0.3mm,
  listing options={style=siamlatex}
}

\newtcblisting[use counter=example]{example}[2][]{%
  title={Example~\thetcbcounter: #2},#1}

\newtcbinputlisting[use counter=example]{\examplefile}[3][]{%
  title={Example~\thetcbcounter: #2},listing file={#3},#1}

\DeclareTotalTCBox{\code}{ v O{} }
{ 
  fontupper=\ttfamily\color{black},
  nobeforeafter,
  tcbox raise base,
  colback=codebackground,colframe=white,
  top=0pt,bottom=0pt,left=0mm,right=0mm,
  leftrule=0pt,rightrule=0pt,toprule=0mm,bottomrule=0mm,
  boxsep=0.5mm,
  #2}{#1}

\patchcmd\newpage{\vfil}{}{}{}
\flushbottom


\begin{tcbverbatimwrite}{tmp_\jobname_header.tex}
\title{Limitations of Richardson Extrapolation for Kernel Density Estimation\thanks{Submitted to the editors 12/18/18.}}

\author{Ruben G. Ascoli\thanks{Thomas Jefferson High School for Science and Technology, Alexandria, VA  (\email{rubenascoli01@gmail.com}). Adviser: Tyrus Berry, Department of Applied Math, George Mason University, Fairfax, VA (\email{tberry@gmu.edu})}}

\headers{Richardson Extrapolation for Kernel Density Estimation}{Ruben G. Ascoli}
\end{tcbverbatimwrite}
\input{tmp_\jobname_header.tex}

\ifpdf
\hypersetup{ pdftitle={SIAM KDE Paper} }
\fi


\begin{document}
\maketitle

\begin{tcbverbatimwrite}{tmp_\jobname_abstract.tex}
\begin{abstract}
  This paper develops the process of using Richardson Extrapolation to improve the Kernel Density Estimation method, resulting in a more accurate (lower Mean Squared Error) estimate of a probability density function for a distribution of data in $\mathbb{R}^d$ given a set of data from the distribution. The method of Richardson Extrapolation is explained, showing how to fix conditioning issues that arise with higher-order extrapolations. Then, it is shown why higher-order estimators do not always provide the best estimate, and it is discussed how to choose the optimal order of the estimate. It is shown that given $n$ one-dimensional data points, it is possible to estimate the probability density function with a mean squared error value on the order of only $n^{-1}\sqrt{\ln(n)}$. Finally, this paper introduces a possible direction of future research that could further minimize the mean squared error.
\end{abstract}

\begin{keywords}
  Density function, Kernel, Richardson Extrapolation, Nonparametric, KDE
\end{keywords}

\begin{AMS}
  62G07 
\end{AMS}
\end{tcbverbatimwrite}
\input{tmp_\jobname_abstract.tex}

\section{Introduction}
\label{sec:intro}
The Kernel Density Estimator (KDE) is a non-parametric method of estimating the underlying probability density function of some unknown distribution given a sample of data from that distribution. The KDE always yields a smooth function, and the fact that it is non-parametric means the estimated probability density function can take any shape, whether bimodal, sinusoidal, normal, or any other shape \cite{ScottVBK}.

Research on Kernel Density Estimation has gained popularity since early papers on the subject by Rosenblatt (1956) \cite{Rosenblatt56}, Whittle (1958) \cite{whittle58}, and Parzen (1962) \cite{Parzen62}. This research has a variety of applications in data collection, especially as we consider higher-dimensional data and the limits of accuracy we can get to even with these more elusive probability density functions. Cacoullos (1966) first considered multivariate density estimation \cite{cacoullos66}.

The KDE of a sample $X_1, X_2, X_3, ..., X_n \in \mathbb{R}^d$ of data of size $n$ from an unknown distribution $P$ with twice-differentiable probability density function $p(x)$, using the Gaussian kernel, is given by \[\hat{p}_n(x) = \frac{1}{nh^d}\sum_{i=1}^n \frac{e^{-(||x-X_i||/h)^2/2}}{(2\pi)^{d/2}},\] where $\hat{p}_n(x)$ is the estimated probability density function at point $x$ and $h$ is the bandwidth parameter that determines the amount of smoothing of the data \cite{ScottVBK}. The value of $h$ is generally small. Note that various kernels can be used for KDE, but in this paper we only consider the Gaussian kernel. The KDE turns each data point into a smooth Gaussian bump and adds up all of these bumps to get a smooth estimated probability density function.

This estimate is approximately equal to the actual probability density function $p$ plus some error which decreases with larger values of $n$. This paper uses the mean squared error (MSE) value as an indicator of error. For one-dimensional data sets (i.e. $d=1$), Whittle showed that the MSE cannot decrease faster than $n^{-1}$ \cite{whittle58}, whereas we can easily make the error decrease like $n^{-4/5}$, as explained by Scott (1980) \cite{positiveHigherOrder} and as will be shown in \cref{sec:error}. For the general case with $d$-dimensional data, the error will instead not be able to decrease faster than $n^{-1/2}$ for large values of $d$, but we will show that a decrease proportional to $n^{-4/4+d}$ can be attained rather easily.

In this paper, we first show the calculations for the MSE in \cref{sec:error}. Then, in \cref{sec:rich}, we expand on this using Richardson Extrapolation to determine a better estimate that gives us a lower error. In \cref{sec:lagrange}, we show how to continue this method and surpass obstacles that rise up against Richardson Extrapolation. Finally, in \cref{sec:limitations}, we obtain our results and discuss the limitations of this approach, and in \cref{sec:results} and \cref{sec:future}, we summarize what we gain by using this method and explore a possible direction for future research.

 We now turn to a detailed derivation of the bias, variance, and expected mean squared error of the estimator which will be needed in \cref{sec:rich}.

\section{Bias, Variance, and Mean Squared Error}
\label{sec:error}
The kernel density estimator of a probability density function deviates from the actual underlying probability density function by a certain error. Our measure of error is the mean squared error, which we calculate in this section. These calculations are standard, as in Rosenblatt \cite{Rosenblatt56}, but they form the basis for later parts of this paper.
\subsection{Bias}
\label{sec:bias}
We begin with this statement regarding the bias of the kernel density estimator:
\begin{theorem}\label{thm:biasmag} Given a $(2k)$-times differentiable probability density function $p(x)$ with bounded derivatives of order $2k$, if the random data $X_1, ..., X_n$ sampled from $p(x)$ are independent and identically distributed (IID), the bias of the kernel density estimator for $p(x)$ is \[\mathbb{E}[\hat{p}_n(x)] - p(x) = m_2(x)h^2 + m_4(x)h^4+...+m_{2k}(x)h^{2k}.\] \end{theorem}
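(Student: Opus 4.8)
The plan is to compute $\mathbb{E}[\hat{p}_n(x)]$ directly from the definition of the Gaussian KDE and expand it as a power series in $h$ using a Taylor expansion of $p$. First I would use linearity of expectation and the fact that the $X_i$ are IID to reduce the computation to a single term: $\mathbb{E}[\hat{p}_n(x)] = \frac{1}{h^d}\mathbb{E}\!\left[(2\pi)^{-d/2}e^{-(\|x-X_1\|/h)^2/2}\right] = \int_{\mathbb{R}^d} \frac{1}{h^d}(2\pi)^{-d/2}e^{-(\|x-y\|/h)^2/2}\, p(y)\, dy$. The key step is then the change of variables $y = x + hu$ (so $dy = h^d\, du$), which turns this into $\int_{\mathbb{R}^d} (2\pi)^{-d/2}e^{-\|u\|^2/2}\, p(x+hu)\, du = \mathbb{E}_{U}[p(x+hU)]$, where $U$ is a standard $d$-dimensional Gaussian. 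In other words, the expected value of the estimator is the true density convolved against a Gaussian of width $h$.

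Next I would Taylor-expand $p(x+hu)$ about $x$ to order $2k$ with a remainder term, writing $p(x+hu) = \sum_{|\alpha| \le 2k} \frac{h^{|\alpha|}}{\alpha!} D^\alpha p(x)\, u^\alpha + R_{2k}(x, hu)$, and integrate term by term against the Gaussian weight. The moments of a standard Gaussian kill every term with $|\alpha|$ odd (giving $0$), and for $|\alpha|$ even they produce the explicit constant Gaussian moments; collecting the surviving terms by the power of $h$ gives exactly the stated form $m_2(x)h^2 + m_4(x)h^4 + \cdots + m_{2k}(x)h^{2k}$, where $m_{2j}(x)$ is a fixed linear combination of the partial derivatives of $p$ of order $2j$ at $x$ (in one dimension, $m_{2j}(x) = \frac{p^{(2j)}(x)}{2^j\, j!}$). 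The $|\alpha|=0$ term contributes $p(x)$, which cancels against the $-p(x)$ in the bias, so no constant term survives — this is why the expansion starts at $h^2$.

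The main obstacle is controlling the remainder: one must show that the contribution of $R_{2k}(x,hu)$, after integration against the Gaussian, is genuinely negligible (i.e. that the expansion is exact up through order $h^{2k}$ with a controlled higher-order error, rather than merely formal). This is where the hypothesis that $p$ is $(2k)$-times differentiable \emph{with bounded derivatives of order $2k$} does the work: it lets me bound $|R_{2k}(x,hu)| \le C\,h^{2k}\|u\|^{2k}$ uniformly, and since the Gaussian has finite moments of all orders, $\int (2\pi)^{-d/2}e^{-\|u\|^2/2}\|u\|^{2k}\,du < \infty$, so the remainder contributes a term that is $O(h^{2k})$ uniformly in $x$ — more precisely, one uses the integral form of Taylor's remainder and the boundedness of the top derivatives to absorb it, so that the displayed equality holds with the understanding that the $m_{2j}$ are the exact coefficients and any genuine error is lower-order (of size $o(h^{2k})$ or absorbed into the final term). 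A minor point to check along the way is that all these integrals converge and that differentiation/integration can be interchanged, which again follows from the Gaussian decay and the boundedness assumption.
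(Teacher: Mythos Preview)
Your proposal is correct and follows essentially the same route as the paper: reduce to a single term by IID and linearity, write the expectation as a convolution integral, substitute $u=(y-x)/h$, Taylor-expand $p(x+hu)$ to order $2k$, and use the vanishing of odd Gaussian moments. The only minor difference is in the handling of the top-order term: the paper uses the Lagrange form of the remainder and \emph{defines} $m_{2k}(x)$ via $\partial^\alpha p(x+cuh)$ at an intermediate point (so $m_{2k}$ tacitly depends on $h$), whereas you propose bounding the remainder separately; either treatment works under the stated boundedness hypothesis.
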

\begin{proof}
We start with this statement, coming directly from the definition of KDE: \[\mathbb{E}[\hat{p}_n(x)] = \mathbb{E}\left[\frac{1}{nh^d}\sum_{i=1}^n \frac{e^{-(||x-X_i||/h)^2/2}}{(2\pi)^{d/2}}\right].\] We assumed that the data $X_1, ..., X_n$ are independent and identically distributed, so that since this is an expectation value, we can replace $X_i$ with $X_1$ (or any single $X$ value) in this expression. \[\mathbb{E}[\hat{p}_n(x)] = \mathbb{E}\left[\frac{1}{nh^d}\sum_{i=1}^n \frac{e^{-(||x-X_1||/h)^2/2}}{(2\pi)^{d/2}}\right].\] 

Now, within the sum, there is no dependence on $i$, so we can replace the sum with $n$ times the addend. Then, the $n$ cancels with the $n$ in front of the sum, so we get \[\mathbb{E}[\hat{p}_n(x)] = \mathbb{E}\left[\frac{1}{h^d}\frac{e^{-(||x-X_1||/h)^2/2}}{(2\pi)^{d/2}}\right].\] Noting that $X_1$ is our variable here (rather than $x$ or $h$, which are fixed), we replace $X_1$ with $y$ in our expression to avoid confusion. We now turn this expectation value into an integral $dy$. \[\mathbb{E}[\hat{p}_n(x)] = \int_{\mathbb{R}^d} \frac{1}{h^d}\frac{e^{-(||x-y||/h)^2/2}}{(2\pi)^{d/2}}p(y)\ dy.\] 

In order to evaluate this integral, we use change of variables. We substitute \[u = \frac{y-x}{h}, \ \ du = det\left(\frac{du}{dy}\right)dy = \frac{1}{h^d}dy, \ \ y = uh+x.\] The limits of integration remain the same. Our equation becomes \[\mathbb{E}[\hat{p}_n(x)] = \int_{\mathbb{R}^d} \frac{e^{-u^2/2}}{(2\pi)^{d/2}}p(uh+x)\ du.\] If we choose a small value for $h$, then we can use Taylor's theorem to expand this integral. We assume the probability density function is ($2k$)-times differentiable. Using multi-index notation, with $\alpha=(\alpha_1, \alpha_2, ..., \alpha_d)$, $|\alpha| = \alpha_1+\alpha_2+...+\alpha_d$, and $\alpha! = \alpha_1!\alpha_2!...\alpha_d!$, Taylor's theorem says \[p(x+uh) = \sum_{|\alpha|\leq 2k-1}\frac{\partial^\alpha p(x)}{\alpha!} (uh)^\alpha + \sum_{|\alpha|=2k} \frac{\partial^\alpha p(x+cuh)}{\alpha!} (uh)^\alpha \mbox{ for some } c\in (0,1).\] 
The equation for the expectation value of $\hat{p}_n(x)$ becomes
\[\mathbb{E}[\hat{p}_n(x)] = \sum_{|\alpha|\leq 2k-1}\int_{\mathbb{R}^d} \frac{e^{-u^2/2}}{(2\pi)^{d/2}}\frac{\partial^\alpha p(x)}{\alpha!} (uh)^\alpha\ du.\]

We now recognize three things: firstly, assuming that the density function has bounded $(2k)$\textsuperscript{th} derivatives, all the integrals in this sum are finite due to the exponential decay factor; secondly, the $|\alpha|=0$ in this sum is simply $p(x)$ times the integral of the probability density function of the standard normal distribution, which gives $p(x)$ times $1$; lastly, all the integrals with odd $|\alpha|$ are 0 since we integrate an odd function of $u$ over a domain that is symmetric about the origin. Our equation becomes\[\mathbb{E}[\hat{p}_n(x)] - p(x) = m_2(x)h^2 + m_4(x)h^4+...+m_{2k}(x)h^{2k},\] where \[m_{2j}(x) = \sum_{|\alpha|=2j}\int_{\mathbb{R}^d} \frac{e^{-u^2/2}}{(2\pi)^{d/2}}\frac{\partial^\alpha p(x)}{\alpha!} u^\alpha\ du\] for $j\neq k$ and \[m_{2k}(x) = \sum_{|\alpha|=2k}\int_{\mathbb{R}^d} \frac{e^{-u^2/2}}{(2\pi)^{d/2}}\frac{\partial^\alpha p(x+cuh)}{\alpha!} u^\alpha\ du \mbox{ for some } c\in (0,1).\] 
\end{proof}

Note that, although in this proof we assumed the derivatives of order $2k$ to be bounded, the bias would be finite even if these derivatives had polynomial growth due to the exponential decay factor in the integrand.

The following corollary is more relevant for the base use of KDE.
\begin{corollary}
Given a twice-differentiable probability density function $p(x)$ with a bounded second derivative, if the random data $X_1, ..., X_n$ sampled from $p(x)$ are IID, the bias of the kernel density estimator for $p(x)$ has magnitude on the order of $h^2$; in other words, \[\mathbb{E}[\hat{p}_n(x)]-p(x)=O(h^2).\]
\end{corollary}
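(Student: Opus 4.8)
The plan is to specialize \cref{thm:biasmag} to the case $k=1$ and then control the single surviving term. With $k=1$, the hypothesis of that theorem — that $p$ is $(2k)$-times differentiable with bounded derivatives of order $2k$ — is exactly the hypothesis of the corollary, so the theorem applies verbatim and gives
\[\mathbb{E}[\hat{p}_n(x)] - p(x) = m_2(x)h^2,\]
where, by the $j=k$ formula appearing in the proof of \cref{thm:biasmag},
\[m_2(x) = \sum_{|\alpha|=2}\int_{\mathbb{R}^d} \frac{e^{-u^2/2}}{(2\pi)^{d/2}}\,\frac{\partial^\alpha p(x+cuh)}{\alpha!}\, u^\alpha\ du\]
for some $c\in(0,1)$.

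It then remains only to show that $m_2(x)$ is bounded by a constant that does not depend on $h$, so that $m_2(x)h^2 = O(h^2)$. First I would invoke the hypothesis that the second derivative is bounded: there is a constant $M$ with $|\partial^\alpha p(y)| \le M$ for every $y\in\mathbb{R}^d$ and every $\alpha$ with $|\alpha|=2$. Inserting this bound inside the integral and moving the absolute value through gives
\[|m_2(x)| \le \sum_{|\alpha|=2} \frac{M}{\alpha!}\int_{\mathbb{R}^d} \frac{e^{-u^2/2}}{(2\pi)^{d/2}}\,|u^\alpha|\ du =: C,\]
and each integral on the right is a finite absolute moment of the standard Gaussian, so $C$ is a finite constant depending only on $d$ and $M$. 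Hence $|\mathbb{E}[\hat{p}_n(x)] - p(x)| \le C h^2$, which is the claimed $O(h^2)$ bound.

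I do not expect a genuine obstacle here; the one point that deserves care is checking that the constant in the $O(h^2)$ estimate does not secretly depend on $h$ through the Taylor-remainder point $x+cuh$ at which the second derivatives are evaluated. This is precisely where the \emph{uniform} boundedness of the second derivative is used: since $M$ bounds $|\partial^\alpha p|$ at every point of $\mathbb{R}^d$, it bounds it at $x+cuh$ no matter what the (unknown) values of $c$ and $h$ are, so the estimate is truly uniform in $h$. An alternative, self-contained route would be to redo the first-order Taylor expansion of $p(x+uh)$ with integral remainder directly, but invoking \cref{thm:biasmag} with $k=1$ is shorter and reuses the work already done.
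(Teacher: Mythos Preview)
Your proposal is correct and follows essentially the same approach as the paper: specialize \cref{thm:biasmag} to $k=1$ so that only the $m_2(x)h^2$ term survives, and then bound $m_2(x)$ using the assumed uniform bound on the second derivatives. Your treatment is in fact more careful than the paper's, since you explicitly address why the Lagrange-remainder evaluation point $x+cuh$ does not introduce hidden $h$-dependence in the constant.
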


\begin{proof}
We start with the result of \cref{thm:biasmag}. Since $h$ is small, we only take the $h^2$ term and ignore the rest as higher order.
We must assume that the density function has a bounded second derivative so that $m_2(x)$ is finite. We call the maximum value that the second derivative could possibly achieve $l$. Therefore, the bias value is \textit{at most} \[\mathbb{E}[\hat{p}_n(x)]-p(x)=\frac{lh^2}{2}\] The bias is therefore on the order of $ O(h^2).$
\end{proof}

Note that, for now, we are dismissing the value of $l$ as a constant that does not change the order of the bias. This will not be true in later sections of this paper.

Although picking a small $h$ would give us that the expected value for the estimated probability density function is very close to the actual probability density function, we should not be so quick to simply pick a very small value of $h$ because this would result in a very large variance.

\subsection{Variance}
\label{sec:var}
The variance measures the amount that our actual estimate for the probability density function differs from the expected value of the estimated probability density function and can be measured by \[\sigma^2 = \mathbb{E}[(\hat{p}_n-\mathbb{E}[\hat{p}_n])^2].\] 

As will be shown, this value approaches infinity as $h$ approaches $0$, meaning that although the expected value of the estimated probability density function will approach the actual probability density function, our actual estimated probability density function might be all over the place, resulting in a high mean squared error despite the low bias.

\begin{theorem}\label{thm:varmag} Given a probability density function p(x), the variance of its kernel density estimator has magnitude on the order of $1/(nh^d)$; in other words, \[\sigma^2 = \mathbb{E}[(\hat{p}_n-\mathbb{E}[\hat{p}_n])^2]=O\left(\frac{1}{nh^d}\right).\] \end{theorem}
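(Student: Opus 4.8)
The plan is to start from the definition of the variance and use the fact that the $X_i$ are IID to reduce the variance of the sum to $1/n$ times the variance of a single kernel term. Writing $K_h(x,X_i) = \frac{1}{h^d}\frac{e^{-(\|x-X_i\|/h)^2/2}}{(2\pi)^{d/2}}$, we have $\hat p_n(x) = \frac1n \sum_i K_h(x,X_i)$, so by independence $\sigma^2 = \operatorname{Var}(\hat p_n(x)) = \frac{1}{n}\operatorname{Var}(K_h(x,X_1)) = \frac{1}{n}\left(\mathbb{E}[K_h(x,X_1)^2] - (\mathbb{E}[K_h(x,X_1)])^2\right)$.

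Next I would handle the two terms separately. The second term $(\mathbb{E}[K_h(x,X_1)])^2$ is just $(\mathbb{E}[\hat p_n(x)])^2$, which by \cref{thm:biasmag} (or its corollary) converges to $p(x)^2$ as $h\to 0$; in particular it is $O(1)$ and contributes only a term of order $1/n$, which is dominated by $1/(nh^d)$ for small $h$. The main work is the first term: I would write $\mathbb{E}[K_h(x,X_1)^2] = \int_{\mathbb{R}^d} \frac{1}{h^{2d}}\frac{e^{-(\|x-y\|/h)^2}}{(2\pi)^{d}} p(y)\,dy$ and then apply the same change of variables $u = (y-x)/h$, $dy = h^d\,du$, used in the proof of \cref{thm:biasmag}. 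This turns the integral into $\frac{1}{h^d}\int_{\mathbb{R}^d} \frac{e^{-u^2}}{(2\pi)^{d}} p(x+uh)\,du$. Pulling out the explicit $1/h^d$ factor, the remaining integral is $\frac{1}{(2\pi)^d}\int e^{-u^2} p(x+uh)\,du$, which as $h\to 0$ tends to $p(x)\cdot\frac{1}{(2\pi)^d}\int_{\mathbb{R}^d} e^{-\|u\|^2}\,du = p(x)\cdot\frac{1}{2^d(2\pi)^{d/2}}$, a finite constant (assuming $p$ continuous and bounded near $x$, so the integral is well behaved). Hence $\mathbb{E}[K_h(x,X_1)^2] = \frac{C_d\,p(x) + o(1)}{h^d}$ for a dimensional constant $C_d$.

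Combining, $\sigma^2 = \frac{1}{n}\left(\frac{C_d p(x) + o(1)}{h^d} - O(1)\right) = O\!\left(\frac{1}{nh^d}\right)$, which is the claim. I would also note for completeness that this is sharp: the leading term is exactly $\frac{C_d p(x)}{nh^d}$, so the variance does blow up as $h\to 0$, consistent with the remark preceding the theorem.

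The main obstacle is really just making the $h\to 0$ limit of the rescaled integral rigorous, i.e. justifying $\int e^{-u^2} p(x+uh)\,du \to p(x)\int e^{-u^2}\,du$; this needs either continuity and boundedness of $p$ together with dominated convergence, or simply the assumption (already in force from the bias section) that $p$ is smooth with controlled derivatives, in which case a Taylor expansion as in \cref{thm:biasmag} gives the limit together with an explicit $O(h^2)$ correction. Everything else — the IID reduction, the change of variables, and absorbing the $O(1)$ term — is routine.
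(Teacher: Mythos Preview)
Your argument is correct and follows essentially the same route as the paper: reduce to a single-summand variance via the IID assumption, compute $\mathbb{E}[K_h(x,X_1)^2]$ with the change of variables $u=(y-x)/h$, and observe that the remaining contribution is lower order. The only cosmetic difference is that you invoke $\operatorname{Var}=\mathbb{E}[K_h^2]-(\mathbb{E}[K_h])^2$ at the outset, whereas the paper expands $(K_h-\mathbb{E}[\hat p_n])^2$ inside the integral and then discards the non-leading terms; the dominant integral and the resulting $p(x)/(nh^d)$ scaling coincide.
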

\begin{proof}

Plugging in the expression for $\hat{p}_n$ and rearranging terms, we get \[\sigma^2 = \frac{1}{n^2}\mathbb{E}\left[\left(\sum_{i=1}^n \left(\frac{e^{-((x-X_i)/h)^2/2}}{(2\pi)^{d/2}h^d}-\mathbb{E}[\hat{p}_n(x)]\right)\right)^2\right].\] We now use the fact that the variance of a sum is the sum of the variances. Also, as before, we use the fact that the data $X_i$ are IID so that we can replace $X_i$ with $X_1$ in this expression. Then, without any dependence on $i$ inside the sum, we replace the sum with $n$ times the addend. \[\sigma^2 = \frac{1}{n}\mathbb{E}\left[\left(\frac{e^{-((x-X_1)/h)^2/2}}{(2\pi)^{d/2}h^d}-\mathbb{E}[\hat{p}_n(x)]\right)^2\right].\] 

As before, we replace $X_1$ with $y$ to avoid confusion. This expectation value is now turned into an integral. Also, we are still considering a small value of $h$, so we can plug in the expression for $\mathbb{E}[\hat{p}_n(x)]$ that we obtained before. \[\sigma^2 = \frac{1}{n}\int_{\mathbb{R}^d} \left(\frac{e^{-((x-y)/h)^2/2}}{(2\pi)^{d/2}h^d}-p(x)+O(h^2)\right)^2p(y)\ dy.\] 

Again, as before, we will use the substitution \[u = \frac{y-x}{h}, \ \ du = det\left(\frac{du}{dy}\right)dy = \frac{1}{h^d}dy, \ \ y = uh+x.\] Our equation becomes \[\sigma^2 = \frac{h^d}{n}\int_{\mathbb{R}^d} \left(\frac{e^{-u^2/2}}{(2\pi)^{d/2}h^d}-p(x)+O(h^2)\right)^2p(uh+x)\ du.\] We will now expand this expression, noting that the $O(h^2)$ is insignificant compared to the other terms. We get \[\sigma^2 = \frac{h^d}{n}\int_{\mathbb{R}^d} \left(\frac{e^{-u^2}}{(2\pi)^d h^{2d}}-\frac{2}{(2\pi)^{d/2}}\frac{e^{-u^2/2}}{h^d}p(x)+p(x)^2\right)p(uh+x)\ du.\] We see that the first term inside this integral is much greater than the other terms as $h$ gets small, so we simplify this into \[\sigma^2 = \frac{h^d}{n}\int_{\mathbb{R}^d} \frac{e^{-u^2}}{(2\pi)^d h^{2d}}p(uh+x)\ du+H.O.T.,\] where $H.O.T.$ means Higher Order Terms. Using a similar analysis as when calculating bias, we can say that \[\sigma^2 = \frac{1}{nh^d}p(x)\int_{\mathbb{R}^d} \frac{e^{-u^2}}{(2\pi)^d}\ du+H.O.T.\] That integral simplifies to $1/(2\pi)^{d/2}$. Therefore, the variance is given by \[\sigma^2 = \frac{1}{nh^d(2\pi)^{d/2}}p(x)+H.O.T. = O\left(\frac{1}{nh^d}\right).\]

\end{proof}

Thus, as $h$ approaches $0$, although the expected value of the estimated density function approaches the actual density function as shown in \cref{thm:biasmag}, the variance approaches infinity. There is a trade off that must be considered when deciding on a value for $h$ - when we pick a value too large, we will not get a good estimate of the actual probability density function, but when we pick a value too small, the variance becomes too large. What we really want to do is minimize the mean squared error (MSE), which depends on both the bias and the variance.

\subsection{Mean Squared Error}
\label{sec:mse}
The mean squared error takes into account both the bias and the variance with the following equality:
\begin{theorem}\label{thm:msebv}
The mean squared error of the kernel density estimator is equal to its variance plus its bias squared. In other words, \[\mathbb{E}[(\hat{p}_n(x)-p(x))^2] = \sigma^2+bias^2.\]
\end{theorem}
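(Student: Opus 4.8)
The plan is the standard add-and-subtract trick for the bias--variance decomposition. I would begin by inserting the expected estimator inside the square, writing
\[
\hat{p}_n(x) - p(x) = \bigl(\hat{p}_n(x) - \mathbb{E}[\hat{p}_n(x)]\bigr) + \bigl(\mathbb{E}[\hat{p}_n(x)] - p(x)\bigr),
\]
and then expanding the square of this sum into three pieces: the square of the first bracket, twice the product of the two brackets, and the square of the second bracket.

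Next I would apply $\mathbb{E}[\cdot]$ term by term, using linearity of expectation. The key observations are that $\mathbb{E}[\hat{p}_n(x)]$ and $p(x)$ are deterministic quantities (the randomness lives only in the sample $X_1,\dots,X_n$), so the second bracket $\mathbb{E}[\hat{p}_n(x)] - p(x)$ is a constant and may be pulled outside any expectation. Thus the expectation of the third piece is exactly $\bigl(\mathbb{E}[\hat{p}_n(x)] - p(x)\bigr)^2 = bias^2$, and the expectation of the first piece is $\mathbb{E}\bigl[(\hat{p}_n(x) - \mathbb{E}[\hat{p}_n(x)])^2\bigr] = \sigma^2$ by the definition of variance given in \cref{sec:var}.

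Finally I would show the cross term vanishes: its expectation is
\[
2\bigl(\mathbb{E}[\hat{p}_n(x)] - p(x)\bigr)\,\mathbb{E}\bigl[\hat{p}_n(x) - \mathbb{E}[\hat{p}_n(x)]\bigr],
\]
and the inner expectation is $\mathbb{E}[\hat{p}_n(x)] - \mathbb{E}[\hat{p}_n(x)] = 0$. Collecting the surviving terms gives $\mathbb{E}[(\hat{p}_n(x)-p(x))^2] = \sigma^2 + bias^2$.

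There is no real obstacle here; this is a routine identity. The only point requiring care is bookkeeping about what is random and what is fixed --- making sure the reader sees that $p(x)$ and $\mathbb{E}[\hat{p}_n(x)]$ are constants with respect to the expectation, which is precisely what forces the cross term to zero and lets the bias term come out of the expectation untouched.
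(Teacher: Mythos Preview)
Your proposal is correct and follows essentially the same route as the paper: insert and subtract $\mathbb{E}[\hat{p}_n(x)]$, expand the square, observe that the cross term has expectation zero, and identify the remaining two pieces as $\sigma^2$ and $bias^2$. If anything, your treatment of the cross term is slightly more explicit than the paper's, which simply drops it between lines without spelling out the factor-out-the-constant step.
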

\begin{proof}
This proof involves some algebraic manipulation, starting with the definition of mean squared error.
\begin{align*}
\mathbb{E}[(\hat{p}_n(x)-p(x))^2] 
&= \mathbb{E}[(\hat{p}_n(x)-\mathbb{E}[\hat{p}_n(x)]+\mathbb{E}[\hat{p}_n(x)]-p(x))^2]\\
&= \mathbb{E}[(\hat{p}_n(x)-\mathbb{E}[\hat{p}_n(x)])^2+2(\hat{p}_n(x)-\mathbb{E}[\hat{p}_n(x)])(\mathbb{E}[\hat{p}_n(x)]-p(x))\\ &\qquad+(\mathbb{E}[\hat{p}_n(x)]-p(x))^2]\\
&=  \mathbb{E}[(\hat{p}_n(x)-\mathbb{E}[\hat{p}_n(x)])^2+(\mathbb{E}[\hat{p}_n(x)]-p(x))^2]\\
&=\mathbb{E}[(\hat{p}_n(x)-\mathbb{E}[\hat{p}_n(x)])^2]+(\mathbb{E}[\hat{p}_n(x)]-p(x))^2\\
&= \sigma^2+bias^2,
\end{align*}
where we have added and subtracted the same term, $\mathbb{E}[\hat{p}_n(x)],$ in order to show that this is the value of the variance, $\sigma^2$, plus the bias squared.
\end{proof}

This MSE is what we actually want to minimize, and this is done when we set the variance and bias squared equal to each other. So up to a factor of a constant, we get $1/(nh^d) = h^4.$ In other words, we want to choose a value for $h$ that is a constant times $n^{-1/(d+4)}.$ We then get an error value on the order of  \[\sigma^2+bias^2=O(h^4)=O(n^{-4/(d+4)}).\] Note that when $d=1$ we get an MSE value on the order of $n^{-4/5}$, as promised by Scott \cite{positiveHigherOrder}.

\cref{fig:fig1} shows estimates of the standard normal distribution in one and two dimensions using the Kernel Density Estimation. In the first graph, the estimated and actual functions are compared, while in the second graph, only the estimate is shown.

\begin{figure}[tbhp]\centering\subfloat[ ]{\label{fig:1a}\includegraphics[scale=0.35]{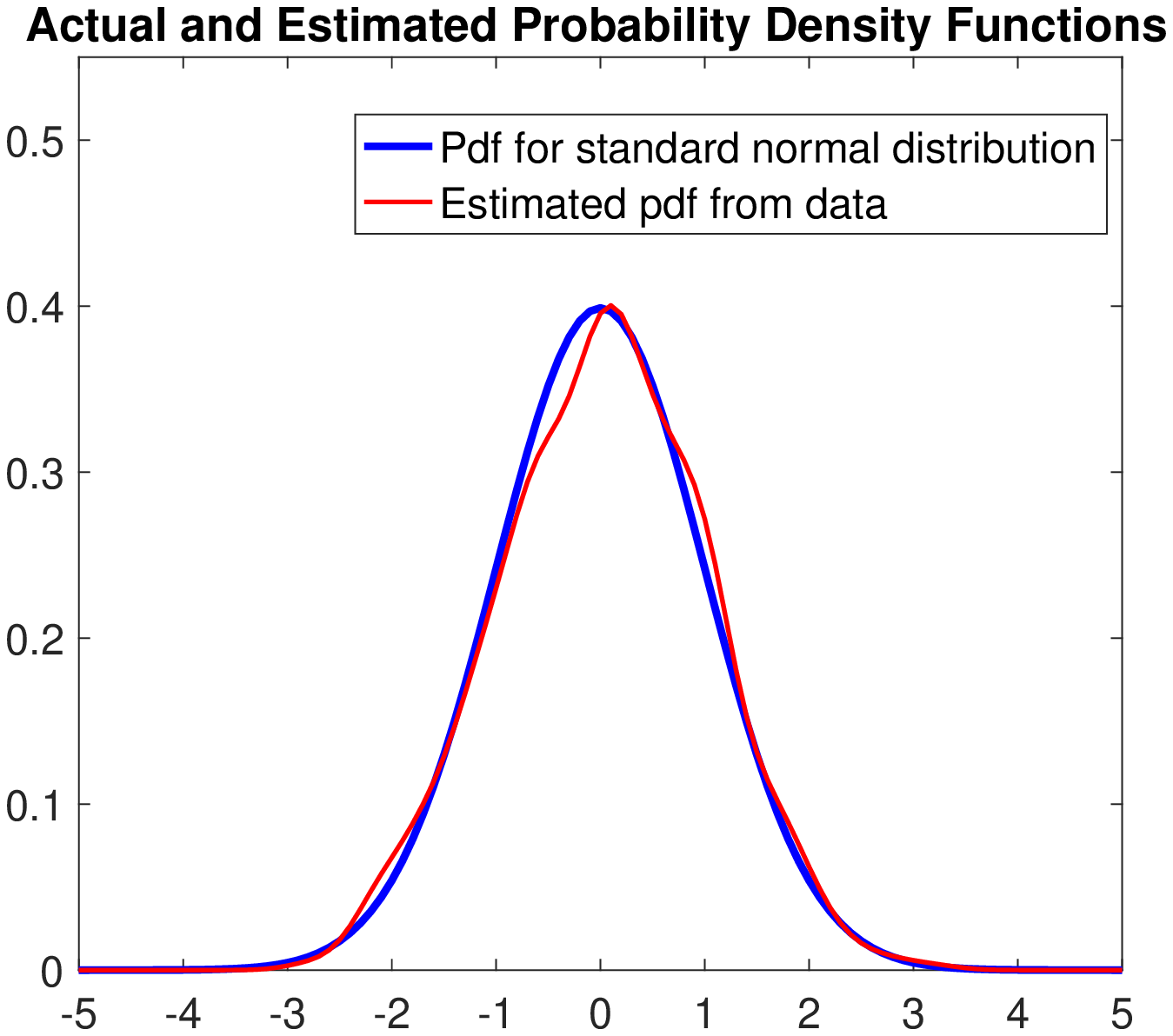}}\subfloat[ ]{\label{fig:1b}\includegraphics[scale=0.34]{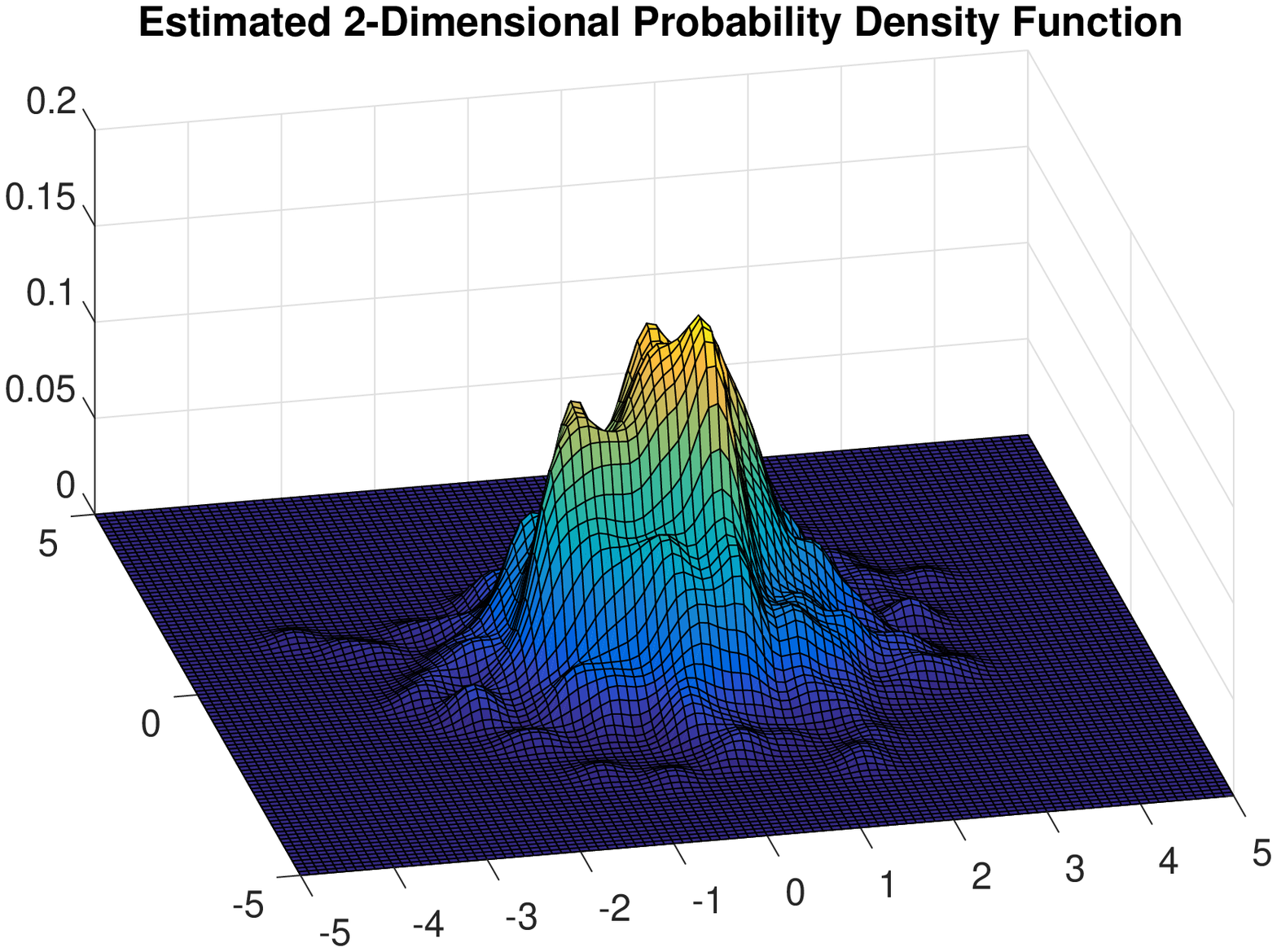}}\caption{Examples of density function estimates. \textit{(a)}: Actual and estimated probability density functions for the one-dimensional standard normal distribution with n=1000. \textit{(b)}: Estimated probability density function for the two-dimensional standard normal distribution with n=1000. Notice that the estimate appears to be more accurate for the one-dimensional case.}\label{fig:fig1}\end{figure}

The purpose of this paper is to demonstrate a method to reduce the error produced by using Kernel Density Estimation. The Curse of Dimensionality states that non-parametric methods such as this one will not be able to achieve an error of $O(n^{-1/2})$ for a large value of $d$, and although our new method does not quite reach this value, it does improve significantly on the value achieved by this base use of KDE. In the next section, we present the first step in this direction.

\section{Using the Richardson Extrapolation}
\label{sec:rich}
We now employ a method known as Richardson Extrapolation to decrease the bias and thereby reduce the error.

We pick two different values for $h$, which we call $h_1$ and $h_2$. Our estimated probability density function is now \[\hat{p}_{n,2}(x) = c_1\hat{p}_{n,h_1}(x)+c_2\hat{p}_{n,h_2}(x),\] where the subscript $2$ in $\hat{p}_{n,2}(x)$ indicates there are $2$ values for $h$, and $c_1$ and $c_2$ are constant weights. We will use this new estimate to decrease the value of the error.

\begin{theorem}\label{thm:newbmag}
  Using two values of $h$ with the estimator $\hat{p}_{n,2}(x)$, the bias is reduced to $O(h^4)$.
\end{theorem}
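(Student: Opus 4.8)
The plan is to choose the constant weights $c_1,c_2$ so that the leading $O(h^2)$ term in the bias expansion of \cref{thm:biasmag} is annihilated. Applying \cref{thm:biasmag} to each bandwidth separately gives $\mathbb{E}[\hat{p}_{n,h_i}(x)] - p(x) = m_2(x)h_i^2 + m_4(x)h_i^4 + \cdots$ for $i=1,2$, so by linearity of expectation $\mathbb{E}[\hat{p}_{n,2}(x)] = (c_1+c_2)p(x) + m_2(x)\bigl(c_1 h_1^2 + c_2 h_2^2\bigr) + m_4(x)\bigl(c_1 h_1^4 + c_2 h_2^4\bigr) + \cdots$. The strategy is then to impose exactly the two conditions needed to push the bias down one order.

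First I would require $c_1 + c_2 = 1$, so that $\hat{p}_{n,2}$ remains (asymptotically) an unbiased-at-leading-order estimator of $p(x)$, and $c_1 h_1^2 + c_2 h_2^2 = 0$, so that the $h^2$ contribution disappears. Writing $h_1 = h$ and $h_2 = rh$ for a fixed ratio $r$ with $0<r<1$, this becomes the linear system $c_1 + c_2 = 1$, $c_1 + r^2 c_2 = 0$, whose coefficient matrix has determinant $r^2 - 1 \neq 0$; hence there is a unique solution, namely $c_2 = 1/(1-r^2)$ and $c_1 = r^2/(r^2-1)$. With these weights the bias of $\hat{p}_{n,2}(x)$ reduces to $m_4(x)\bigl(c_1 h_1^4 + c_2 h_2^4\bigr) + \cdots = m_4(x)h^4\bigl(c_1 + r^4 c_2\bigr) + O(h^6)$, which is $O(h^4)$, as claimed. (I would also note that the surviving coefficient $c_1 + r^4 c_2$ is generically nonzero, so $O(h^4)$ is sharp and not accidentally smaller.)

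The step that needs the most care, rather than being a genuine obstacle, is the invertibility of the $2\times 2$ system: one must insist $h_1 \neq h_2$, since otherwise the two estimators coincide, the determinant vanishes, and no cancellation is possible — this is why two genuinely distinct bandwidths are essential. A minor technical point is that the top-order coefficient coming from Taylor's theorem in \cref{thm:biasmag} is evaluated at a shifted argument $x + cuh$, but the boundedness hypothesis on the $2k$-th derivatives keeps that contribution uniformly $O(h^{2k})$, so truncating the expansion after the $h^4$ term is legitimate. Finally, I would remark (for use in the later MSE optimization, though it is not part of this theorem) that because $c_1$ and $c_2$ are fixed constants independent of $n$, the variance of $\hat{p}_{n,2}$ is still $O\bigl(1/(nh^d)\bigr)$ by the same argument as \cref{thm:varmag}, and that iterating this construction to kill the $h^4$, $h^6,\dots$ terms requires more bandwidths and leads to the ill-conditioning discussed in \cref{sec:lagrange}.
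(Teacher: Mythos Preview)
Your proposal is correct and follows essentially the same route as the paper: apply the bias expansion of \cref{thm:biasmag} to each bandwidth, use linearity of expectation, and impose the two linear constraints $c_1+c_2=1$ and $c_1h_1^2+c_2h_2^2=0$ (which the paper writes as the $2\times 2$ matrix equation) to kill the $h^2$ term, noting that $h_1\neq h_2$ is required for solvability. The only cosmetic differences are that you parametrize $h_2=rh$ and solve for $c_1,c_2$ explicitly, whereas the paper leaves the system in matrix form, and you add the sharpness and variance remarks that the paper defers to \cref{thm:newvmag}.
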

\begin{proof}
We first need to assume that our distribution $p$ is now four times differentiable. Then, recall from \cref{sec:bias} that \[\mathbb{E}[\hat{p}_{n,1}(x)] = p(x)+bias, \ \ \mbox{where}\ \  bias=O(h^2)=\frac{lh^2}{2}+O(h^4).\] Now, thanks to the linearity of expectation value, we have
\begin{align*}
\mathbb{E}[\hat{p}_{n,2}(x)] &= c_1\mathbb{E}[\hat{p}_{n,h_1}(x)]+c_2\mathbb{E}[\hat{p}_{n,h_2}(x)]\\
&=c_1(p(x)+\frac{lh_1^2}{2}+O(h_1^4))+c_2(p(x)+\frac{lh_2^2}{2}+O(h_2^4))\\
&=(c_1+c_2)p(x)+\frac{l}{2}(c_1h_1^2+c_2h_2^2)+O(h_1^4)+O(h_2^4).
\end{align*}

We now choose $c_1$ and $c_2$ so that the following matrix equation is true. The goal of this is to make $\mathbb{E}[\hat{p}_{n,2}(x)]$ be equal to p(x) with a bias of only $O(h^4)$. Note that because of the second part of this equation, one of $c_1$ or $c_2$ must be negative.
\[\begin{bmatrix} 1 & 1 \\ h_1^2 & h_2^2 \end{bmatrix}\begin{bmatrix} c_1 \\ c_2 \end{bmatrix} = \begin{bmatrix} 1 \\ 0 \end{bmatrix}.\] 
We can solve this for $c_1$ and $c_2$ whenever $h_1 \neq h_2$ since the determinant is then non-zero. After doing so, the summed $h^2$ terms of the biases cancel, and we get  \[\mathbb{E}[\hat{p}_{n,2}(x)] = p(x)+O(h_1^4)+O(h_2^4).\] 

Without loss of generality, we assume that $h_1$ is always less than $h_2$ so that we get \[\mathbb{E}[\hat{p}_{n,2}(x)] = p(x)+O(h_2^4).\]
\end{proof}

The bias has now been reduced from $O(h^2)$ to $O(h^4)$ - changed by two entire orders of magnitude.

Unlike the new bias, the following is true of the new variance.

\begin{theorem}
\label{thm:newvmag}
The order of magnitude of the variance is the same with this new estimator $\hat{p}_{n,2}(x)$ as with the original one: $O(1/(nh^d)).$ 
\end{theorem}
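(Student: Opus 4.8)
The plan is to reuse the variance calculation from \cref{thm:varmag} essentially verbatim, tracking how the two-term linear combination affects the leading-order term. First I would write $\hat{p}_{n,2}(x) = c_1\hat{p}_{n,h_1}(x) + c_2\hat{p}_{n,h_2}(x)$ and expand the variance of this sum. Unlike in the bias computation, the two estimators $\hat{p}_{n,h_1}$ and $\hat{p}_{n,h_2}$ are \emph{not} independent — they are built from the \emph{same} data points $X_1,\dots,X_n$ — so the variance of the sum is not simply the sum of the variances; there will be a cross-covariance term. So the expansion is $\sigma_2^2 = c_1^2\,\mathrm{Var}(\hat{p}_{n,h_1}) + c_2^2\,\mathrm{Var}(\hat{p}_{n,h_2}) + 2c_1 c_2\,\mathrm{Cov}(\hat{p}_{n,h_1},\hat{p}_{n,h_2})$.

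Next I would bound each piece. By \cref{thm:varmag}, $\mathrm{Var}(\hat{p}_{n,h_j}) = O(1/(nh_j^d))$; since we take $h_1 < h_2$ with both proportional to a common bandwidth $h$, each variance term is $O(1/(nh^d))$, and $c_1, c_2$ are fixed constants (determined by the matrix equation in \cref{thm:newbmag}, depending only on the ratio $h_1/h_2$), so the first two terms contribute $O(1/(nh^d))$. For the covariance term I would run the same IID reduction as in \cref{thm:varmag}: writing the covariance as $\tfrac1n\big(\mathbb{E}[K_{h_1}(x-X_1)K_{h_2}(x-X_1)] - \mathbb{E}[K_{h_1}]\,\mathbb{E}[K_{h_2}]\big)$ where $K_h$ denotes the scaled Gaussian kernel, turning the expectation into an integral against $p(y)$, and applying the substitution $u = (y-x)/h_1$ (or $h_2$). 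The product of two Gaussians of bandwidths $h_1, h_2$ is again a Gaussian, and the leading term after the change of variables scales like $1/(n h^d)$ up to a constant depending only on $h_1/h_2$, with the $\mathbb{E}[K_{h_1}]\mathbb{E}[K_{h_2}]$ piece being lower order (it is $O(1/n)$). Hence $\mathrm{Cov} = O(1/(nh^d))$ as well, and the whole combination is $O(1/(nh^d))$.

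The main obstacle is handling the covariance term carefully: one must make sure that the cross term does not somehow cancel the leading behavior (which would actually \emph{improve} the bound, so that direction is harmless) nor blow up faster than $1/(nh^d)$. The key quantitative fact is that $\int \exp(-\tfrac12(\|u\|^2 + \|u h_1/h_2 + (x-y')/h_2\|^2))\,du$-type integrals, after the substitution, produce a finite Gaussian integral times an overall factor of $h_1^d$ (or equivalently a constant times $h^d$), so dividing by $n h_1^d h_2^d$ from the two kernel normalizations leaves exactly the $1/(nh^d)$ scaling. I would record that the implied constant now depends on the fixed ratio $h_1/h_2$ and on $c_1, c_2$, but not on $n$ or $h$, which is all that is needed. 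Since every term is $O(1/(nh^d))$ and there is no cancellation forced at leading order, the sum is $O(1/(nh^d))$, completing the proof.
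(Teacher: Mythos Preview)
Your proposal is correct and follows essentially the same route as the paper's proof: both reduce via IID to a single-sample expectation, split into the two diagonal terms (handled exactly as in \cref{thm:varmag}) plus a cross term, and evaluate the cross term by recognizing the product of the two Gaussian kernels as a single Gaussian. The only cosmetic difference is that the paper expands the squared sum first and then identifies the three integrals, whereas you invoke the bilinear variance--covariance decomposition up front; and the paper carries out the cross-term substitution with the explicit combined bandwidth $v = h_1 h_2/\sqrt{h_1^2+h_2^2}$ to obtain the closed form $\sigma^2 = \tfrac{p(x)}{n(2\pi)^{d/2}}\bigl(c_1^2/h_1^d + c_2^2/h_2^d + 2\sqrt{2}\,c_1 c_2/(h_1^2+h_2^2)^{d/2}\bigr)$, while you leave that constant implicit.
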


The proof of this is very similar to that of \cref{thm:varmag}, so we will not show it here, but the reader is welcome to find it in \cref{sec:vnewmag}.

To minimize the error, therefore, we set $h$ to be a constant times $n^{-1/(d+8)}$ and get an error value of \[\sigma^2+bias^2=O(h^8)=O(n^{-8/d+8}),\] which is visibly better than our previous $O(n^{-4/d+4})$, as shown in \cref{fig:fig2}. However, for large values of $d$, this is still a rather large error value. The next sections are dedicated to decreasing it further.

\begin{figure}[tbhp]\centering\subfloat[ ]{\label{fig:2a}\includegraphics[scale=.37]{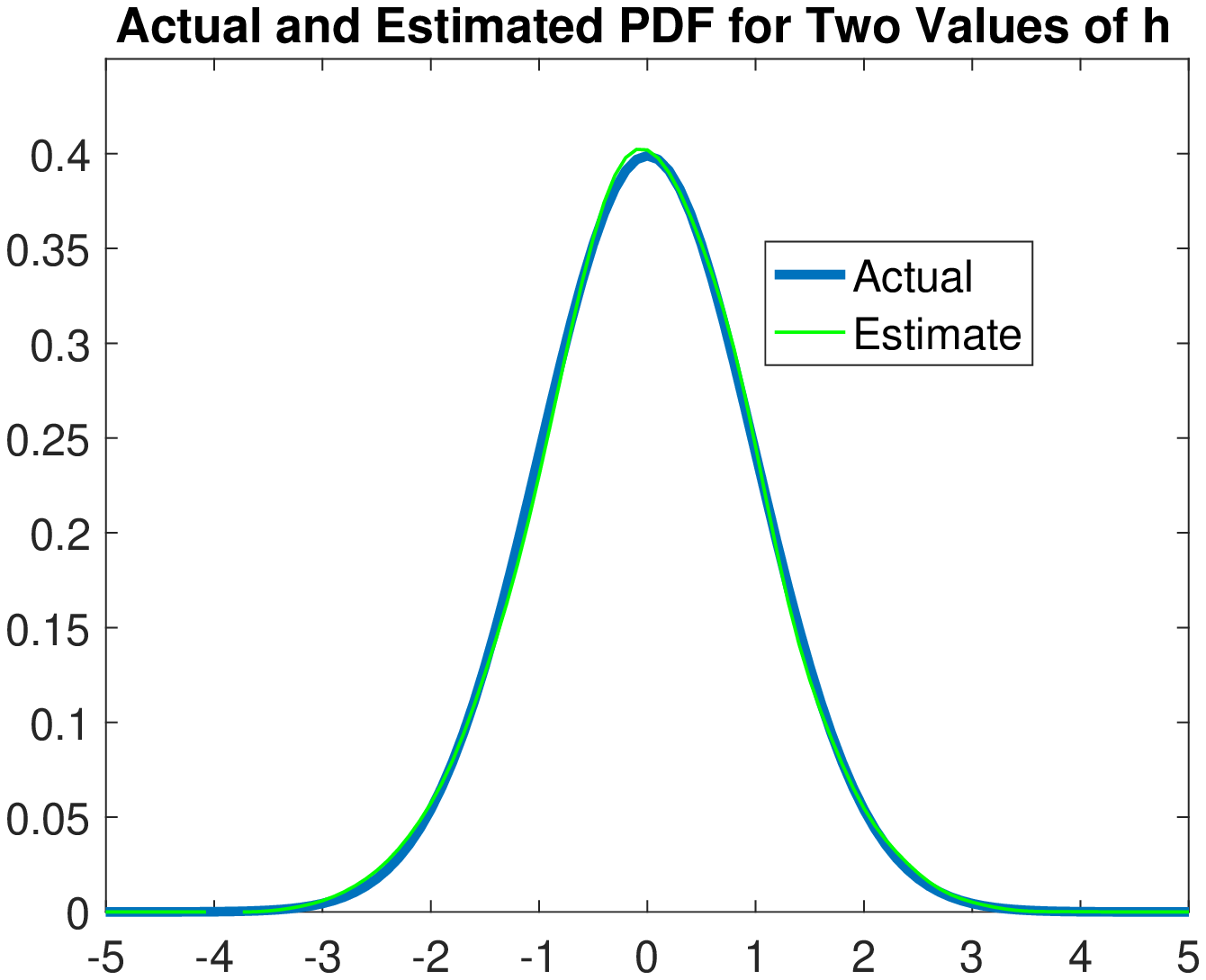}}\subfloat[ ]{\label{fig:2b}\includegraphics[scale=.33]{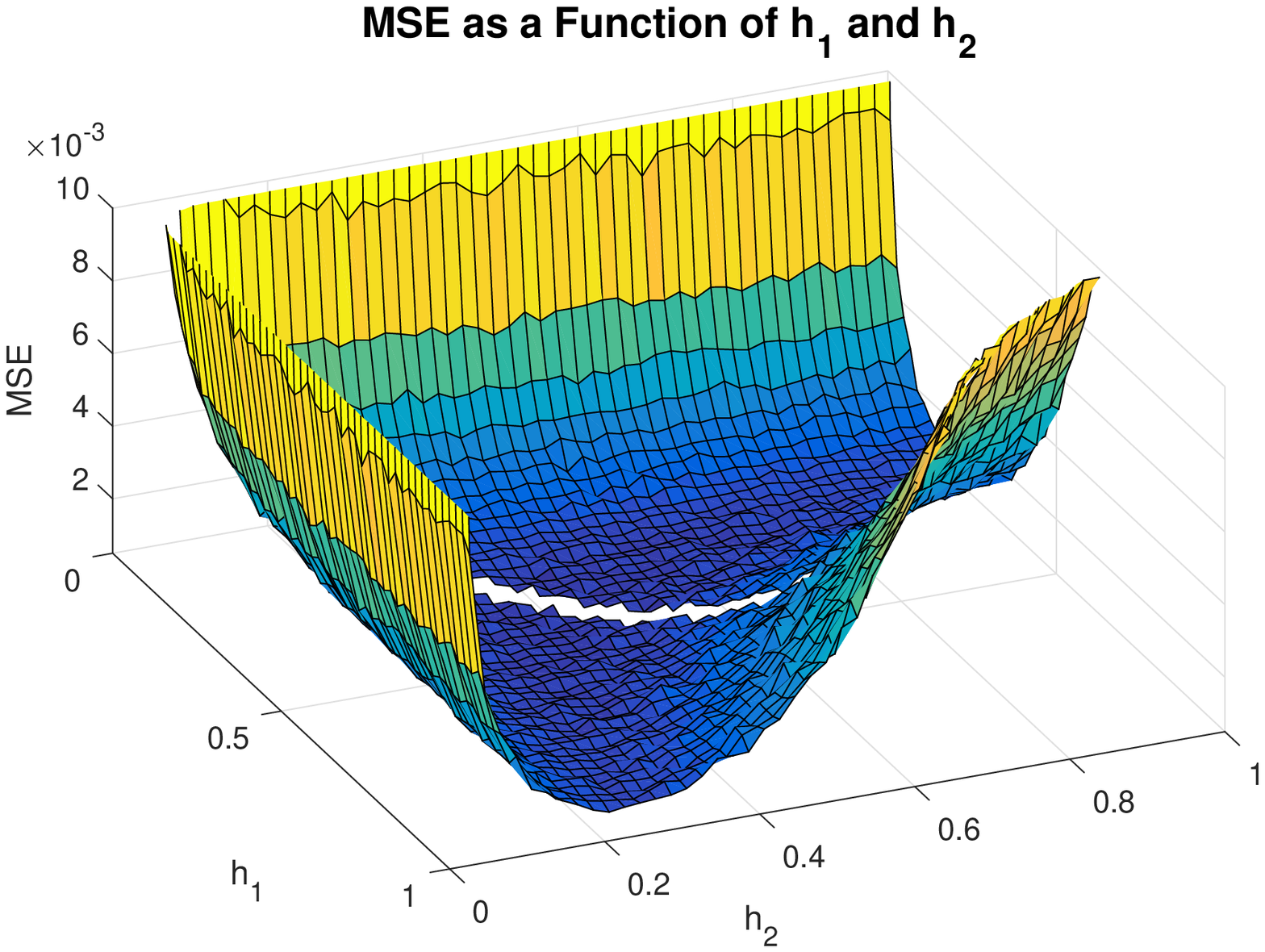}}\caption{A visual depiction of using two values of $h$. \textit{(a)}: Actual and estimated probability density functions for the one-dimensional standard normal distribution with n=1000 using $2$ values of $h$. You can see we have a much better estimate for the standard normal probability density function than in \cref{fig:1a}. \textit{(b)}: How to choose two values of $h$ to minimize the mean squared error. Note that the two values of $h$ cannot be the same, or we cannot find the values of $c_1$ and $c_2$. You can see how we should choose pairs of h values in the valley in order to minimize error.}\label{fig:fig2}\end{figure}

\section{Expanding on the Richardson Extrapolation}
\label{sec:lagrange}
There is no particular reason to stop at only two values for $h$. We will instead use $r$ values, keeping with the notation of Schucany (1977) \cite{schucany1977improvement}. We assume that $p(x)$ has $2r$ derivatives. The value of our estimate is then \[\hat{p}_{n,r}(x)=c_1\hat{p}_{n,h_1}(x)+c_2\hat{p}_{n,h_2}+...+c_r\hat{p}_{n,h_r},\] or if we define a vector $\vec{c}$ containing all the weights and a vector $\vec{p}_n(x)$ containing each individual estimate, then \[\hat{p}_{n,r}(x)=\vec{c}^{\ T}\vec{p}_n(x).\]

Then, to find the values of weights, we solve the following matrix equation for $c_1$ through $c_r$ by taking the inverse of the left-most matrix $R$ and left-multiplying it on both sides:
\[\begin{bmatrix}1 & 1 & 1 & ...\\ h_1^2 & h_2^2 & h_3^2 & ... \\ h_1^4 & h_2^4 & h_3^4 & ...\\ ... & ... & ... & ...  \end{bmatrix}\begin{bmatrix}c_1 \\ c_2 \\ c_3 \\ ...\end{bmatrix} = \begin{bmatrix}1 \\ 0 \\ 0 \\ ...\end{bmatrix}.\]

Using this method, if we choose $4r=d$, it seems we could in theory get the bias to be so small that our error converges to a limiting value of $O(n^{-4r/(4r+d)})=O(n^{-1/2})$ even for very large values of $d$. Unfortunately, it doesn't work out so nicely. The first obstacle to doing this is that $R$ becomes ill-conditioned as $r$ gets large, meaning that as consecutive rows get closer to $0$, the rows become linearly dependent to the top row of all ones, numerically speaking. Instead, we must solve for $\vec{c}$ without taking the inverse of $R$. In \cref{sec:limitations}, we will discuss the second obstacle to the coveted $n^{-1/2}$ result, but here we show the way past the first obstacle.

We start by taking the transpose of the equation above.
\[\begin{bmatrix}c_1 & c_2 & c_3 & ...\end{bmatrix}\begin{bmatrix} 1 & h_1^2 & h_1^4 & ... \\ 1 & h_2^2 & h_2^4 & ... \\ 1 & h_3^2 & h_3^4 & ... \\ ... & ... & ... & ...\end{bmatrix} = \begin{bmatrix}1 & 0 & 0 & ...\end{bmatrix}\]

The $r \times r$ matrix $R^T$ in the middle of this equation is called a Vandermonde matrix, except each term is squared. We call the row vector on the right hand side of this equation $\vec{k}^{\ T}$. The solution for $c_1$ through $c_r$ is \[\vec{c}^{\ T} = \vec{k}^{\ T}(R^T)^{-1}.\] Then, since $\vec{k}^{\ T}$ is a row vector with $1$ as the first element followed by all $0$'s, we know that $\vec{c}^{\ T}$ is simply the first row of $(R^T)^{-1}$. However, our goal is to avoid computing the inverse of $R^T$, so we will work around this using using the method of Lagrange Interpolation. We write
\[R^T\begin{bmatrix}a_0 \\ a_1 \\ a_2 \\ ...\end{bmatrix} = \begin{bmatrix}P_1(h_1^2) \\ P_1(h_2^2) \\ P_1(h_3^2) \\ ...\end{bmatrix}, \mbox{ where } P_1(h^2) = a_0 + a_1h^2+a_2h^4+...+a_{r-1}(h^2)^{r-1}.\]
Rearranging terms, we get
\[\begin{bmatrix}a_0 \\ a_1 \\ a_2 \\ ...\end{bmatrix} = (R^T)^{-1}\begin{bmatrix}P_1(h_1^2) \\ P_1(h_2^2) \\ P_1(h_3^2) \\ ...\end{bmatrix}.\]

In order to extract the first column of $(R^T)^{-1}$, we pick a polynomial $P_1(h^2)$ such that $P_1(h_1^2)=1$ and $P_1(h^2)=0$ for all other $h$'s. Using Lagrange Interpolation, such a polynomial looks like this: \[P_1(h^2) = \frac{(h^2-h_2^2)(h^2-h_3^2)...(h^2-h_r^2)}{(h_1^2-h_2^2)(h_1^2-h_3^2)...(h_1^2-h_r^2)}.\] 

Fortunately, we do not have to expand this completely; since we only need to find the first row of $(R^T)^{-1}$, we only need to compute $a_0$, the constant term. Since the column vector of polynomials on the right is simply $1$ followed by $0$'s, $a_0$ is equal to the element in the first row and first column of $(R^T)^{-1}$, which in turn is equal to $c_1$. So, we calculate \[c_1 = ((R^T)^{-1})_{11} = a_0 = \frac{(-h_2^2)(-h_3^2)...(-h_r^2)}{(h_1^2-h_2^2)(h_1^2-h_3^2)...(h_1^2-h_r^2)}.\] 

Next, we obtain $c_2$ by finding the element of the first row and second column of $(R^T)^{-1}$. This is done by finding the value of $a_0$ for a polynomial $P_2(h)$ that has value $1$ for $h=h_2$ and $0$ for all other $h$'s. As expected following the pattern, this comes out as \[c_2 = ((R^T)^{-1})_{12} = a_0 = \frac{(-h_1^2)(-h_3^2)(-h_4^2)...(-h_r^2)}{(h_2^2-h_1^2)(h_2^2-h_3^2)(h_2^2-h_4^2)...(h_2^2-h_r^2)}.\] Thus we generalize: the formula for $c_i$ is \[c_i = \prod_{j=1,\ j\neq i}^r \left(\frac{-h_j^2}{h_i^2-h_j^2}\right).\]

\section{Limitations due to the Derivative}
\label{sec:limitations}

Using this method, we can extend the order of the Richardson Extrapolation without having to worry about inverting the matrix. However, it is still not convenient to use a very large value of $r$. This is because, although it seems that MSE has order $O(n^{-4r/(4r+d)})$, we are neglecting the ``constant'' term multiplied by the bias, which although is constant for fixed $r$ ends up blowing up enormously with increasing values of $r$. 

Recall from \cref{sec:bias} that for $r=1$ the bias is equal to $\frac{lh^2}{2}$, where $l$ is bounded above by the maximum possible value of the second derivative of $p(x)$. For the general case with $r$ values of $h$, a straightforward generalization of \cref{thm:newbmag} shows that the bias becomes \[\sum_{i=1}^r\left(\frac{l_rh_i^{2r}}{2}\right),\] where $l_r$ is the weighted integral of the $(2r)$\textsuperscript{th} derivative of $p(x)$ times $u^{2r}$. We can calculate that $l_r$ is bounded by the maximum value of the $(2r)$\textsuperscript{th} derivative of $p(x)$. We must sum over all the different $h$'s, but if one $h$ is even slightly bigger than the others, then raising it to the power of $2r$ makes it much more significant than the others and this can simplify to $l_rh^{2r}/2$. 

As it turns out, for the example with a Gaussian density function, where $p(\rho)=e^{-\rho^2/2}/(2\pi)^{d/2}$, the value of $l_r$ blows up enormously for increasing values of $r$ - so much so that despite the fact that $h^{2r}$ continues to become smaller, the bias can no longer decrease.

In order to find the true order of magnitude of the bias, then, we must find the order of magnitude of $l_r$. This could vary between different probability density functions, so we will look at the standard normal distribution as an example, where $p(\rho)=e^{-\rho^2/2}/(2\pi)^{d/2}$. The $(2r)$\textsuperscript{th} derivative of this function with respect to $\rho$ can be expressed as $e^{-\rho^2/2}/(2\pi)^{d/2}$ times a polynomial. The maximum possible absolute value of a derivative is therefore the absolute value of the constant term of this polynomial divided by $(2\pi)^{d/2}$. The absolute value of this constant term is found to be $(2r-1)!!$, the so-called double factorial or semifactorial of $2r-1$. For example, for $r=4$, the constant term is $7!! = 7*5*3*1 = 105$, so the value of $l_r$ is $105/(2\pi)^{d/2}$. Note that $d$ is fixed and we are therefore treating $(2\pi)^{d/2}$ as a constant.

The next step is to figure out how big $(2r-1)!!$ is. We start by relating $(2r-1)!!$ to $(2r)!!$. The ratio between the two values is a constant times $\sqrt{r}$. We will neglect constants for now because we are only trying to find the order of magnitude. \[(2r-1)!! = \frac{(2r)!!}{\sqrt{r}}.\]

Next, note that the terms being multiplied together in $(2r)!!$ correspond to two times each term being multiplied in $r!$, so we can establish that $(2r)!! = 2^r*r!$. Thus, \[(2r-1)!! = \frac{2^r*r!}{\sqrt{r}}.\] Next, we use Stirling's Approximation to estimate $r!$ as $\sqrt{r}(r/e)^r$. Our equation becomes \[(2r-1)!! = \frac{2^r*\sqrt{r}}{\sqrt{r}}\left(\frac{r}{e}\right)^r = \left(\frac{2r}{e}\right)^r.\] 

The order of the bias is therefore  \[bias = O\left(r^r\left(\frac{2}{e}\right)^rh^{2r}\right).\] We now calculate the bias-variance tradeoff as we did in \cref{sec:mse}: 
\begin{align*}
bias^2 &= \sigma^2\\
O\left(r^{2r}\left(\frac{2}{e}\right)^{2r}h^{4r}\right)&=O\left(\frac{1}{nh^d}\right)\\
h &= \frac{1}{n^{1/(4r+d)}}\left(\frac{e}{2r}\right)^{2r/(4r+d)}.
\end{align*}

This gives us a value for the best theoretical $h$ in terms of $n$, $d$, and $r$. In practice, we choose $r$ values of $h$ that are approximately equal to this theoretical best value, but not actually equal, since that would result in division by zero when finding $\vec{c}$. We now compute the best value of $r$ - that is, the best order of the Richardson Extrapolation - in terms of $n$ and $d$. The MSE is equal to the bias squared plus the variance, so we plug this value of $h$ into the variance: \[MSE = O\left(\frac{1}{nh^d}\right) = O\left(\frac{1}{n^{4r/(4r+d)}}\left(\frac{2r}{e}\right)^{2rd/(4r+d)}\right).\]

Now we take the derivative with respect to $r$ and set it equal to $0$. (Note: the $d$ in the left side of the following equation is the differential; in the right side, it is the dimension.) \[\frac{d\ MSE}{dr} = 2^{\frac{2dr}{4r+d}+1}e^{-\frac{2dr}{4r+d}}r^{\frac{2dr}{4r+d}}n^{\frac{-4r}{4r+d}}(4r+d)^{-2}d(-2\ln(n)+4r+d\ln(r)+d\ln(2)) = 0\] 

Since only the last term here could equal $0$, we have \[(-2\ln(n)+4r+d\ln(r)+d\ln(2)) = 0.\]
Solving this equation for $r$ gives us a formula for the best value of $r$ given $n$ data points. The solution turns out to be \[r=\frac{1}{4}d\ W\left(\frac{2n^{2/d}}{d}\right),\] where $W(x)$ is the product log function, also known as the Lambert-W function, that can be expressed as the inverse of the function $f(W)=We^W$.

We estimate that $4r/(4r+d)=1/2$ and $2rd/(4r+d)=d/4$. Plugging this back into the formula for MSE, we get \[MSE = O\left(\frac{1}{n^{1/2}}\left(\frac{dW\left(\frac{2n^{2/d}}{d}\right)}{2e}\right)^{d/4}\right) = O(n^{-1/2}(dW(\alpha))^{d/4}),\] where $\alpha = 2n^{2/d}/d$. This is similar to the coveted goal value of $O(n^{-1/2})$, but with a correction factor that gets larger with larger $d$ and larger $n$.

\section{Summary of Results}
\label{sec:results}
It is thus shown that we can achieve a mean squared error value of $O(n^{-1/2}(dW((2n^{2/d})/d))^{d/4}).$ This result for large values of $d$ may be confusing, so we consider the case with $d=1$, when we want $MSE=O(n^{-1})$. Then, we estimate that $4r/(4r+1)=1$ and $2r/(4r+1)=\frac{1}{2}.$ We also argue that $r = W(\alpha)/4 = W(2n^2)/4$ is approximately equal to $\ln(2)/4+\ln(n)/2).$ Plugging this back into the formula for MSE, we get \[MSE = O\left(\frac{1}{n}\left(\frac{\ln(n)}{e}\right)^{1/2}\right) = O\left(\frac{1}{n}\sqrt{\ln(n)}\right).\] This is similar to the coveted goal value of $O(1/n)$ but with a correction factor of $\sqrt{\ln(n)}$.

\section{Future Research}
\label{sec:future}
\begin{figure}[!b]\centering\includegraphics[scale=0.75]{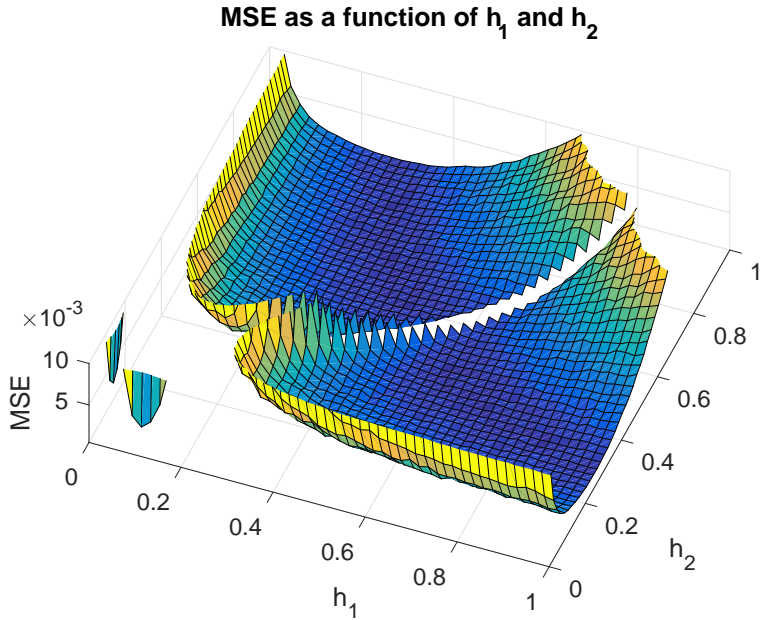}\caption{When choosing $c_1$ and $c_2$ to satisfy the constraint $\vec{c}^{\ T}V\vec{c} = \vec{c}^{\ T}B\vec{c}$, we choose pairs of $h$ values to minimize the mean squared error. The resulting MSEs are comparable to those using first order Richardson Extrapolation, shown in \cref{fig:2b}.}\label{fig:fig3}\end{figure}
Although this method improves the error value associated with the Kernel Density Estimator, it is very limited and certainly does not break the Curse of Dimensionality due to the correction factors associated with our final result. There is a variety of possible directions for further research, and here we present one of them. Instead of imposing many Richardson Extrapolation constraints in order to determine the values in $\vec{c}$, we use one fewer such constraint and instead impose the constraint $\vec{c}^{\ T}V\vec{c} = \vec{c}^{\ T}B\vec{c}$, where $V$ and $B$ are the variance and bias matrices, respectively:
\[V = \frac{1}{n}\begin{bmatrix}\frac{1}{h_1^d} & \frac{\sqrt{2}}{(h_1^2+h_2^2)^{d/2}} & ... \\ \frac{\sqrt{2}}{(h_1^2+h_2^2)^{d/2}} & \frac{1}{h_2^d} & ... \\ ... & ... & ... \end{bmatrix}\ \ \ \ \ B =\begin{bmatrix} h_1^4 & h_1^2h_2^2 & ... \\ h_1^2h_2^2 & h_2^4 & ... \\ ... & ... & ...\end{bmatrix}.\]

This could result in an even lower mean squared error. \cref{fig:fig3} shows the error when two values of $h$ are chosen and the values of $c_1$ and $c_2$ are computed using this constraint.\newpage

\appendix
\section{Proof of \cref{thm:newvmag}}
\label{sec:vnewmag}

The new calculation of the variance is the following, with similar simplification steps as with the case with a single value for $h$ in \cref{thm:varmag}. For conciseness, let \[K_h(x,a) = \frac{e^{-((x-a)/h)^2/2}}{(2\pi)^{d/2}h^d}.\]
\begin{align*}
\sigma^2 &= \mathbb{E}[(\hat{p}_{n,2}-\mathbb{E}[\hat{p}_{n,2}])^2]\\
&=\frac{1}{n^2}\mathbb{E}\left[\left(\sum_{i=1}^n (c_1(K_{h_1}(x,X_i)-\mathbb{E}[\hat{p}_n(x)])+c_2(K_{h_2}(x,X_i)-\mathbb{E}[\hat{p}_n(x)]))\right)^2\right]\\
&=\frac{1}{n}\mathbb{E}[ (c_1(K_{h_1}(x,X_i)-\mathbb{E}[\hat{p}_n(x)])+c_2(K_{h_2}(x,X_i)-\mathbb{E}[\hat{p}_n(x)]))^2]\\
&=\frac{1}{n}\int_{\mathbb{R}^d} (c_1(K_{h_1}(x,y)-p(x)+O(h_1^3))+c_2(K_{h_2}(x,y)-p(x)+O(h_2^3)))^2 p(y)\ dy\\
&=\frac{1}{n}\int_{\mathbb{R}^d} (c_1(K_{h_1}(x,y)-p(x)+O(h_1^3))+c_2(K_{h_2}(x,y)-p(x)+O(h_2^3)))^2 p(y)\ dy \\
&=\frac{1}{n}\int_{\mathbb{R}^d} (c_1^2(K_{h_1}(x,y))^2+c_2^2(K_{h_2}(x,y))^2+2c_1c_2(K_{h_1}(x,y))(K_{h_2}(x,y)))p(y)\ dy
\end{align*}

The first two terms of the integrand can be split into their own integrals, and the calculation becomes very similar to that of the variance in \cref{thm:varmag}. From those two terms, we get $O(\frac{1}{nh_1^d})+O(\frac{1}{nh_2^d})$. The last term requires some more calculation.
\begin{align*}
\frac{1}{n}\int_{\mathbb{R}^d} \left(2c_1c_2\frac{e^{-((x-y)/h_1)^2/2-((x-y)/h_2)^2/2}}{(2\pi)^d h_1^dh_2^d}\right)p(y)\ dy \\ = \frac{1}{n}\int_{\mathbb{R}^d} \left(2c_1c_2\frac{e^{-((x-y)^2/2)((h_1^2+h_2^2)/h_1^2h_2^2)}}{(2\pi)^d h_1^dh_2^d}\right)p(y)\ dy \\
= \frac{1}{n}\int_{\mathbb{R}^d} \left(2c_1c_2\frac{e^{-((x-y)/(h_1h_2/\sqrt{h_1^2+h_2^2})^2/2}}{(2\pi)^d h_1^dh_2^d}\right)p(y)\ dy \\
= \frac{1}{n}\int_{\mathbb{R}^d} \left(2c_1c_2\frac{e^{-((x-y)/v)^2/2}}{(2\pi)^d h_1^dh_2^d}\right)p(y)\ dy,
\end{align*}
where we substituted $v = \frac{h_1h_2}{\sqrt{h_1^2+h_2^2}}$. We now use the substitution \[u = \frac{y-x}{v}, \ \ du = det\left(\frac{du}{dy}\right)dy = \frac{1}{v^d}dy, \ \ y = uv+x \mbox{;    we then obtain}\]\[\frac{2v^dc_1c_2}{(2\pi)^d nh_1^dh_2^d}\int_{\mathbb{R}^d} e^{-u^2/2}p(uv+x)\ du = \frac{2\sqrt{2}p(x)c_1c_2}{(h_1^2+h_2^2)^{d/2}n(2\pi)^{d/2}}.\] 

We combine this result with the other two terms from the integral to get \[\sigma^2 = \frac{p(x)}{n(2\pi)^{d/2}}\left(\frac{c_1^2}{h_1^d}+\frac{c_2^2}{h_2^d}+\frac{2\sqrt{2}c_1c_2}{(h_1^2+h_2^2)^{d/2}}\right).\]  After all these calculations, we can see that the variance has not changed - it is still $O\left(\frac{1}{nh^d}\right)$.

\bibliographystyle{siamplain}
\bibliography{KDEbib}

\end{document}